\newcommand{\bv}{\boldsymbol v}
\newcommand{\bx}{\boldsymbol x}
\newcommand{\bU}{\boldsymbol U}
\newcommand{\bS}{\boldsymbol S}
\newcommand{\linenomathpatch}[1]{%
  \cspreto{#1}{\linenomath}%
  \cspreto{#1*}{\linenomath}%
  \csappto{end#1}{\endlinenomath}%
  \csappto{end#1*}{\endlinenomath}%
}
\newtheorem{Theorem}{Theorem}
\newtheorem{lema}{Lemma}
\newcounter{remark}
\def\theremark {\arabic{remark}}
\newtheorem{Proof}{Proof}
\newenvironment{proof}{\begin{Proof}\rm}{\hfill $\Box$ \end{Proof}}
\date{\today}
\begin{document}
\begin{frontmatter}
\title{Second order error bounds for POD-ROM methods based on first order divided differences }

\author[sevilla]{Bosco Garc\'{\i}a-Archilla}
\ead{bosco@esi.us.es; research is supported by
Spanish MCINYU under grants PGC2018-096265-B-I00 and PID2019-104141GB-I00}
\author[wias,fu]{Volker John}
\ead{john@wias-berlin.de, ORCID 0000-0002-2711-4409}
\author[madrid]{Julia Novo\texorpdfstring{\corref{cor}}{}}
\ead{julia.novo@uam.es; research is supported
by Spanish MINECO
under grants PID2019-104141GB-I00 and VA169P20}

\cortext[cor]{Corresponding author}

\address[sevilla]{Departamento de Matem\'atica Aplicada II, Universidad de Sevilla, Sevilla, Spain}

\address[wias]{Weierstrass Institute for Applied Analysis and Stochastics (WIAS), Mohrenstr.
39, 10117 Berlin, Germany}

\address[fu]{Freie Universit\"at Berlin, Dept. of Mathematics and
Computer Science, Arnimallee 6, 14195 Berlin, Germany}

\address[madrid]{Departamento de
Matem\'aticas, Universidad Aut\'onoma de Madrid, Spain}

\begin{abstract} This note proves, for simplicity for the heat equation, that using BDF2 as time 
stepping scheme in POD-ROM methods with snapshots based on difference quotients gives both the optimal second order error bound in time
and pointwise estimates. 
\end{abstract}

\begin{keyword}
Heat equation; POD-ROM methods; BDF2; Pointwise error estimates
\end{keyword}
\end{frontmatter}

\section{Introduction}
Most numerical methods using reduced order models based on proper orthogonal decomposition (POD-ROM methods) apply
basis functions based on the snapshots (or values at different times) of the full order model (FOM). Recently, it has been shown that adding their first divided differences to the snapshots, or even using only these divided differences to obtain the basis functions, allows for pointwise-in-time error bounds \cite{Lo-Sin,nos_der_pod,samu_et_al,novo_rubino}. However, all pointwise-in-time error bounds in the literature are only first order with respect to time. 

Although the first divided differences are only first order approximations to the time derivatives of the snapshots, we show in this note that for POD-ROM methods based only on them
 it is possible to obtain pointwise-in-time second order error bounds if the two step backward differentiation formula (BDF2) is used to integrate the POD-ROM equations. This result is a theoretical support for the 
 observation that second order methods allow for larger step sizes than first order ones without spoiling the error, thus resulting in more efficient POD-ROM simulations.

\section{Model problem and proper orthogonal decomposition}

Throughout this note, standard notations for Sobolev spaces and their norms will be used. 
As a model problem problem, we consider the heat equation
\begin{equation*}
\begin{array}{rcll}
\partial_t u(t,\bx)-\nu\Delta u(t,\bx)&=&f(t,\bx), &(t,\bx)\in (0,T]\times \Omega,\\
u(t,\bx)&=&0,  &(t,\bx)\in (0,T]\times \partial \Omega,\\
u(0,\bx)&= &u^0(\bx), &\bx\in \Omega,
\end{array}
\end{equation*}
in a bounded domain $\Omega\subset {\Bbb R}^d$, $d\in \{2,3\}$. Let $C_p$ be the constant in the Poincar\'e inequality
\begin{equation}\label{poincare}
\|v\|_0\le C_p\|\nabla v\|_0,\quad v\in H_0^1(\Omega).
\end{equation}

Let us denote by $X_h^l$ a finite element method based on piece-wise continuous polynomials of degree $l$ that satisfies the homogeneous Dirichlet boundary conditions.
The semi-discrete Galerkin approximation, the FOM, consists in finding $u_h : [0,T]\to X_h^l$ such that
\[
(\partial_t u_h,v_h)+\nu(\nabla u_h,\nabla v_h)=(f,v_h),\quad \forall\  v_h\in X_h^l.
\]
The following error estimation is well-known:
\begin{equation}\label{cota_gal}
\max_{0\le s\le T}\left(\|(u-u_h)(s)\|_0+h\|(u-u_h)(s)\|_1\right)\le C(u)h^{l+1}.
\end{equation}

Fix $T>0$ and set $\Delta t=T/M$. Let $t^n=n\Delta t$, $n=0,\ldots,M$, $N=M+1$, and define the space
\[
\bU = {\rm span}\left\{\sqrt{N}w_0,\tau\frac{u_h(t^1)-u_h(t^0)}{\Delta t},\tau\frac{u_h(t^2)-u_h(t^1)}{\Delta t}\ldots,\tau\frac{u_h(t^M)-u_h(t^{M-1})}{\Delta t}\right\},\\
\]
where $w_0$ is either $w_0=u_h(t^0)$ or $w_0=\overline u_h=\sum_{j=0}^Mu_h(t^j)/(M+1)$,
and $\tau$ is a time scale to make the snapshots dimensionally correct.
Denote
$
\bU={\rm span}\{y_h^1,y_h^2,\ldots,y_h^N\}.
$
Let $X$ be either $X=L^2(\Omega)$ or $X=H_0^1(\Omega)$, and denote the correlation matrix by $K=((k_{i,j}))\in {\Bbb R}^{N\times N}$ with
$ k_{i,j}=(y_h^i,y_h^j)_X/N$,  $i,j=1,\ldots,N$,
and $(\cdot,\cdot)_X$ being the inner product in $X$. We denote by $\lambda_1\ge \lambda_2\ldots\ge \lambda_d>0$ the positive eigenvalues of $K$ and
by $\bv_1,\ldots,\bv_d\in {\Bbb R}^N$ the associated eigenvectors. The orthonormal POD basis functions of $\bU$ are
given by $\varphi_k=  (\sum_{j=1}^Nv_k^k y_h^j)/(\sqrt{N}\sqrt{\lambda_k})$,
where $v_k^j$ is the $j$-th component of $\bv_k$.
For any $1\le r\le d$  denote by
$
\bU^r={\rm span}\left\{\varphi_1,\varphi_2,\ldots,\varphi_r\right\},
$
and denote by $P^r : X_h^l\to \bU^r$ the $X$-orthogonal projection onto $\bU^r$. Then, it holds
\begin{equation}\label{cota_pod}
\frac{1}{N}\sum_{j=1}^N\|y_h^j-P^r y_h^j\|_X^2=\sum_{k={r+1}}^d\lambda_k.
\end{equation}
The stiffness matrix of the POD basis is given by $S=((s_{i,j}))\in {\Bbb R}^{d\times d}$, with $s_{i,j}=(\nabla \varphi_i,\nabla \varphi_j)_X$. 
If $X=L^2(\Omega)$ the following inequality holds for all $v\in \bU$, see \cite[Lemma~2]{Ku-Vol},
\begin{equation}\label{inv_POD}
\|\nabla v\|_0\le \sqrt{\|S\|_2}\|v\|_0.
\end{equation}

\section{Error analysis}


Let us denote by $D^1 v^n=(v^n-v^{n-1})/\Delta t$ and by $D^2 v^n=((3/2)v^n-2v^{n-1}+(1/2)v^{n-2})/\Delta t$, then the POD-ROM method is defined in the following way: Find
$u_r^n\in \bU^r$ such that 
\begin{equation*}
\left({Du_r^n},v\right)+\nu(\nabla u_r^n,\nabla v)=(f^n,v),\quad\forall\  v\in \bU^r,
\end{equation*}
where $D=D^1$ for $n=1$  and $D=D^2$ for $2\le n\le M$.

\begin{lema}\label{le:diferen}
Let $T>0$, 
let $X$ be a {Banach} space, $z^n=z(t^n)\in X$, then
\begin{eqnarray}\label{eq:dif_losin}
\max_{0\le k\le {M} }\|z^k\|_X^2  & \le &  2\|z^0\|_X^2+\frac{ 2T^2}{M}\sum_{n=1}^M\left\|D^1z_n\right\|_X^2, \\
\label{eq:dif_losin_mean}
\max_{0\le k\le {M} }\|z^k\|_X^2 & \le&   2\|\overline z\|_X^2+\frac{ 8T^2}{M}\sum_{n=1}^M\left\|D^1z_n\right\|_X^2, \quad \mbox{with}\quad \overline z=\sum_{j=0}^Mz^j/(M+1).
 \end{eqnarray}
\end{lema}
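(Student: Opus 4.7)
The proof reduces to one elementary estimate: for $0\le j\le k\le M$, telescope
\[
z^k - z^j \;=\; \Delta t\sum_{n=j+1}^k D^1 z^n,
\]
take $X$-norms, and apply the discrete Cauchy--Schwarz inequality to $\sum_{n=j+1}^k 1\cdot\|D^1z^n\|_X$ to obtain
\[
\|z^k - z^j\|_X^2 \;\le\; (k-j)(\Delta t)^2\sum_{n=j+1}^k\|D^1 z^n\|_X^2 \;\le\; \frac{T^2}{M}\sum_{n=1}^M\|D^1 z^n\|_X^2,
\]
using $(k-j)\Delta t\le T$ and $\Delta t=T/M$. This single bound, combined with $(a+b)^2\le 2a^2+2b^2$, will drive both inequalities.

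For \eqref{eq:dif_losin}, set $j=0$ and combine with $\|z^k\|_X^2\le 2\|z^0\|_X^2+2\|z^k-z^0\|_X^2$; then take the maximum over $k$. For \eqref{eq:dif_losin_mean}, start from $\|z^k\|_X^2\le 2\|\overline z\|_X^2+2\|z^k-\overline z\|_X^2$ and split $z^k-\overline z = (z^k-z^0)+(z^0-\overline z)$ so that
\[
\|z^k-\overline z\|_X^2 \;\le\; 2\|z^k-z^0\|_X^2+2\|z^0-\overline z\|_X^2.
\]
The first summand is at most $2(T^2/M)\sum\|D^1 z^n\|_X^2$ by the core estimate. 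For the second, write $\overline z-z^0 = (M+1)^{-1}\sum_{j=0}^M(z^j-z^0)$; then Jensen and the core estimate applied termwise yield
\[
\|\overline z-z^0\|_X^2 \;\le\; \frac{1}{M+1}\sum_{j=0}^M\|z^j-z^0\|_X^2 \;\le\; \frac{T^2}{M}\sum_{n=1}^M\|D^1 z^n\|_X^2.
\]
Adding and multiplying through by the outer factor of $2$ produces the stated constants $2$ and $8$.

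The whole argument is elementary and I expect no real obstacle. The only point deserving care is the Cauchy--Schwarz step with exactly $(k-j)$ terms (not $M$): keeping that count is what produces the clean prefactor $T^2/M$ rather than a larger multiple. The double-sum manipulation for $\overline z - z^0$ is mechanical because each block $\|z^j-z^0\|_X^2$ is already uniformly controlled by the full-range divided-difference energy.
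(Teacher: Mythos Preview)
Your argument is correct and follows essentially the same route as the paper: telescope $z^k-z^j$ as a sum of first differences, apply Cauchy--Schwarz to get the $T^2/M$ prefactor, and for \eqref{eq:dif_losin_mean} pass through $z^0$ to relate $z^k$ and $\overline z$. The only cosmetic difference is that the paper bounds $\|z^k\|_X$ at the norm level via the triangle inequality and then squares, whereas you square first and use $(a+b)^2\le 2a^2+2b^2$; both yield the stated constants $2$ and $8$.
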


\begin{proof} The proof of \eqref{eq:dif_losin} can be found in \cite[Lemma~3.3]{Lo-Sin}. 
For proving \eqref{eq:dif_losin_mean}, we observe that
\begin{equation}\label{zhal}
z^k=z^0+\Delta t \sum_{n=1}^k {D^1z^n},\quad
\overline z =z^0+\frac{1}{M+1}\left(\Delta t{D^1 z^1}+\ldots+\Delta t \sum_{n=1}^M{D^1 z^n}\right).
\end{equation}
Taking norms yields $\| z^k\|_X\le \|z^0\|_X+\Delta t \sum_{n=1}^M  \left\|{D^1z^n}\right\|_X$
and $\| z^0\|_X\le \|\overline z\|_X+\Delta t \sum_{n=1}^M  \left\|{D^1z^n}\right\|_X$, 
so that
\[
\|z^k\|_X\le  \|\overline z\|_X+2\Delta t \sum_{n=1}^M  \left\|{D^1z^n}\right\|_X\le \|\overline z\|_X+
2T^{1/2}(\Delta t)^{1/2}
\left(\sum_{n=1}^M\left\|D^1 z^n\right\|_X^2\right)^{1/2},
\]
from which we reach \eqref{eq:dif_losin_mean}.
\end{proof}

In the sequel we define $\tilde C=1$ if $w_0=u_h(t^0)$ and $\tilde C=4$ if~$w_0=\overline u_h$, and $C_X=1$ if~$X=L^2(\Omega)$
and $C_X=C_p^2$ f~$X=H^1_0(\Omega)$.

\begin{lema} The following bound holds
\begin{equation}\label{max_dif}
\max_{0\le n\le M}\|u_h^n-P^ru_h^n\|_0^2\le \left(2+4\tilde C \frac{T^2}{\tau^2}\right)C_X\sum_{k={r+1}}^d\lambda_k.
\end{equation}
\end{lema}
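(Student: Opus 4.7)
The plan is to apply Lemma~\ref{le:diferen} to the sequence $z^n := u_h^n - P^r u_h^n$ taken in the Banach space $L^2(\Omega)$, combine the result with the POD error identity (\ref{cota_pod}) evaluated on the specific snapshot set, and convert $X$-norms into $L^2$-norms through the constant $C_X$.

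First, since $P^r$ is linear, it commutes with divided differences, so $D^1 z^n = D^1 u_h^n - P^r D^1 u_h^n$. Depending on whether $w_0 = u_h(t^0)$ or $w_0 = \overline u_h$, one applies (\ref{eq:dif_losin}) or (\ref{eq:dif_losin_mean}), and the two cases can be written uniformly as
\begin{equation*}
\max_{0\le k\le M}\|z^k\|_0^2 \le 2\|w_0 - P^r w_0\|_0^2 + 2\tilde C\,\frac{T^2}{M}\sum_{n=1}^M\|D^1 u_h^n - P^r D^1 u_h^n\|_0^2,
\end{equation*}
because $z^0 = w_0 - P^r w_0$ in either case and because the constants $2$ and $8$ in front of the sums match $2\tilde C$ with $\tilde C = 1$ and $\tilde C = 4$, respectively. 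Each $L^2$-norm on the right is then replaced by an $X$-norm through $\|v\|_0^2 \le C_X \|v\|_X^2$: trivial for $X = L^2(\Omega)$ and a consequence of the Poincar\'e inequality (\ref{poincare}) for $X = H^1_0(\Omega)$.

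Next, I would unpack identity (\ref{cota_pod}) for the specific snapshots $y_h^1 = \sqrt{N}w_0$ and $y_h^{n+1} = \tau D^1 u_h^n$, $n=1,\ldots,M$. Pulling the scalar factors $\sqrt{N}$ and $\tau$ out of the squared norms gives
\begin{equation*}
\|w_0 - P^r w_0\|_X^2 + \frac{\tau^2}{N}\sum_{n=1}^M\|D^1 u_h^n - P^r D^1 u_h^n\|_X^2 = \sum_{k=r+1}^d\lambda_k,
\end{equation*}
so each of these two non-negative terms is individually bounded by $\sum_{k=r+1}^d\lambda_k$. Substituting these bounds into the previous display and using $N/M = (M+1)/M \le 2$ to absorb the mismatch between the $1/M$ coming from Lemma~\ref{le:diferen} and the $1/N$ coming from (\ref{cota_pod}) then yields exactly $\bigl(2 + 4\tilde C T^2/\tau^2\bigr)C_X\sum_{k=r+1}^d\lambda_k$.

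There is no genuine obstacle here; the whole argument is careful bookkeeping of constants. The only point that deserves attention is the dimensional scalings $\sqrt{N}$ and $\tau$ built into the snapshots: their squares must be paired correctly against the $1/N$ from (\ref{cota_pod}) and the $1/M$ from Lemma~\ref{le:diferen}, and it is precisely the slight discrepancy between $N$ and $M$ that produces the extra factor of $2$, and hence the prefactor $4$ rather than $2$ in front of $\tilde C T^2/\tau^2$.
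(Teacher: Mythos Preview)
Your proof is correct and follows exactly the approach of the paper: apply Lemma~\ref{le:diferen} to $z^n=u_h^n-P^ru_h^n$, invoke \eqref{cota_pod} for the specific snapshot set, and use $N\le 2M$. One small wording slip: in the mean case it is $\overline z = w_0 - P^r w_0$ (not $z^0$) that supplies the first term, but your unified display and the ensuing constant tracking are correct.
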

\begin{proof}
Taking $z=u_h-P^r u_h$ in \eqref{eq:dif_losin} or \eqref{eq:dif_losin_mean}, depending on the selection of the 
first element in $\bU$, and applying \eqref{cota_pod} and $N\le2M$,  we reach \eqref{max_dif}.
\end{proof}

\begin{lema}\label{le:cgen} Let $\{z^n\}_{n=0}^N \in \bU^r$  and $\{\tau_1^n\}_{n=1}^N, 
\{\tau_2^n\}_{n=1}^N \in X_h^l$ satisfying
\begin{equation}\label{error_equation00}
\left({D z^n},v\right)+\nu(\nabla z^n,\nabla v)=\left(\tau_1^n,v\right)+\nu(\nabla \tau_2^n,\nabla v),\quad \forall\  v\in \bU^r,
\end{equation}
where $D=D^1$ for $n=1$ and $D=D^2$ for $2\le n\le M$.
Then, it holds for $\Delta t<T/4$ and $n\ge 1$
\begin{eqnarray}\label{eq:cgen2}
\|z^n\|_0^2+2\nu\sum_{j=1}^n\Delta t \|\nabla z^j\|_0^2
&\le& e^{4}\left(17\|z^0\|_0^2+28(\Delta t)^2\left\|\tau_1^1\right\|_0^2+2\Delta t T\sum_{j=2}^N \left\| \tau_1^n\right\|_0^2\right.\nonumber\\
&&\quad \left.+14\nu\Delta t \|\nabla \tau_2^1\|_0^2+2\nu \Delta t\sum_{j=2}^N \left\| \nabla\tau_2^n\right\|_0^2\right).
\end{eqnarray}
\end{lema}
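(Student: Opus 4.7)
The plan is to test \eqref{error_equation00} with $v = z^n \in \bU^r$ and to exploit the standard discrete energy identities of the two schemes. For the first step one has
\[
2\Delta t(D^1 z^1, z^1) = \|z^1\|_0^2 - \|z^0\|_0^2 + \|z^1 - z^0\|_0^2,
\]
and for the BDF2 steps the well-known G-stability identity
\[
2\Delta t(D^2 z^n, z^n) = \tfrac{1}{2}\bigl(G^n - G^{n-1}\bigr) + \tfrac{1}{2}\|z^n - 2z^{n-1} + z^{n-2}\|_0^2,
\]
where $G^n := \|z^n\|_0^2 + \|2z^n - z^{n-1}\|_0^2$. Both identities turn the left-hand side into a telescoping quantity plus the coercive diffusion contribution $2\nu\Delta t\|\nabla z^n\|_0^2$, and since $\|z^n\|_0^2 \leq G^n$, the quantity $G^n$ is the natural one on which to drive a discrete Gronwall argument.

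The two source terms on the right-hand side are absorbed with Cauchy--Schwarz and Young's inequality. The diffusive perturbation is split as $2\nu\Delta t(\nabla\tau_2^n,\nabla z^n)\le\nu\Delta t\|\nabla\tau_2^n\|_0^2+\nu\Delta t\|\nabla z^n\|_0^2$, returning half of the coercive term to the left-hand side. The $L^2$ perturbation is split as $2\Delta t(\tau_1^n,z^n)\le (\Delta t/\epsilon)\|\tau_1^n\|_0^2+\epsilon\Delta t\|z^n\|_0^2$ with a parameter $\epsilon$ of order $1/T$, so that the $\epsilon\Delta t\|z^n\|_0^2\le\epsilon\Delta t\, G^n$ contribution can later be swallowed by Gronwall. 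The specific choice of $\epsilon$ together with the hypothesis $\Delta t<T/4$ is what will ultimately produce the factor $e^4$ in \eqref{eq:cgen2}.

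The Euler step needs separate care for two reasons: no $G^0$ exists to telescope into, and $\tau_1^1$ must appear multiplied by $(\Delta t)^2$, not $\Delta t$, in the final bound. This is achieved by bounding $2\Delta t(\tau_1^1,z^1)\le\delta\|z^1\|_0^2+(\Delta t)^2\delta^{-1}\|\tau_1^1\|_0^2$ with a Young's parameter $\delta$ of order one (rather than $1/T$), which yields an initial bound of the form $\|z^1\|_0^2+\nu\Delta t\|\nabla z^1\|_0^2\le C\|z^0\|_0^2+C(\Delta t)^2\|\tau_1^1\|_0^2+C\nu\Delta t\|\nabla\tau_2^1\|_0^2$. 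The elementary inequality $\|2z^1-z^0\|_0^2\le 8\|z^1\|_0^2+2\|z^0\|_0^2$ then upgrades this to a bound on $G^1$ with the specific constants $17$, $28$ and $14$ that appear in \eqref{eq:cgen2}.

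Summing the BDF2 inequality from $j=2$ to $n$, dropping the nonnegative quadratic-variation term, and invoking the above bound on $G^1$ produces
\[
G^n+\nu\Delta t\sum_{j=2}^n\|\nabla z^j\|_0^2\le G^1+\epsilon\Delta t\sum_{j=2}^n G^j+\Delta t\sum_{j=2}^n\bigl(\epsilon^{-1}\|\tau_1^j\|_0^2+\nu\|\nabla\tau_2^j\|_0^2\bigr),
\]
and the discrete Gronwall lemma closes the estimate with a multiplicative factor of the form $\exp\bigl(2\epsilon T/(1-2\epsilon\Delta t)\bigr)$. The main technical obstacle is not any single step but the joint bookkeeping of $\epsilon$, $\delta$ and the Gronwall coefficient, so that one recovers exactly $e^4$ under $\Delta t<T/4$ while simultaneously reproducing the numerical constants $17$, $28$, $14$ and $2$ displayed in the statement.
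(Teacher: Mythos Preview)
Your proposal is correct and follows essentially the same route as the paper: test with $z^n$, use the BDF2 G-stability identity with $G^n=\|z^n\|_0^2+\|2z^n-z^{n-1}\|_0^2$, split the source terms with Young parameters of order $1/T$ for $n\ge 2$ and of order~$1$ for $n=1$, bound $G^1$ via the Euler step, and close with discrete Gronwall under $\Delta t<T/4$ to produce the factor~$e^4$.

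Two small bookkeeping remarks. First, the inequality $\|2z^1-z^0\|_0^2\le 8\|z^1\|_0^2+2\|z^0\|_0^2$ you quote gives $G^1\le 9\|z^1\|_0^2+2\|z^0\|_0^2$, which combined with $\|z^1\|_0^2\le 2\|z^0\|_0^2+4(\Delta t)^2\|\tau_1^1\|_0^2+2\nu\Delta t\|\nabla\tau_2^1\|_0^2$ yields the constants $20,36,18$, not $17,28,14$; the paper obtains the latter from the sharper splitting $\|2z^1-z^0\|_0^2\le 6\|z^1\|_0^2+3\|z^0\|_0^2$ (via $-4ab\le 2a^2+2b^2$), giving $G^1\le 7\|z^1\|_0^2+3\|z^0\|_0^2$. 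Second, the paper drops $\|2z^n-z^{n-1}\|_0^2$ on the left after summing and applies Gronwall directly to $\|z^n\|_0^2$ rather than to $G^n$; your variant of bounding $\|z^j\|_0^2\le G^j$ on the right and Gronwalling on $G^n$ is equally valid. Neither point affects the argument's correctness.
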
 
\begin{proof}
We take $v=\Delta t z^n$ in~\eqref{error_equation00}. 
If $n=1$ then $D=D^1$ and Young's inequality yields
\begin{equation}
\label{eq:cgenaux00}
\frac{1}{2}\|z^1\|_0^2-\frac{1}{2}\|z^{0}\|_0^2+{\nu}\Delta t \|\nabla z^1\|_0^2 \le \Delta t(\tau_1^1,z^1) + \nu\Delta t(\nabla \tau_2^1,\nabla z^1).
\end{equation}
For $n\ge 2$ then $D=D^2$ and one gets
\[
\frac{1}{4}\|z^n\|_0^2+\frac{1}{4}\|\hat z^n\|_0^2-\frac{1}{4}\|z^{n-1}\|_0^2-\frac{1}{4}\|\hat z^{n-1}\|_0^2
+\nu\Delta t\|\nabla z^n\|_0^2 \le \Delta t(\tau_1^n,z^n) +
 \nu \Delta t(\nabla\tau_2^n,\nabla z^n),
\]
where $\hat z^n = 2z^n - z^{n-1}$. The Cauchy--Schwarz and Young inequality give
\begin{equation}
\label{eq:cgenaux0}
 \Delta t(\tau_1^n,z^n) + \nu\Delta t(\nabla \tau_2^n,\nabla z^n)\le \frac{\Delta t}{2T}\left\|z^n\right\|_0^2 + \frac{T}{2} \left\|\tau_1^n\right\|_0^2 +\Delta t\frac{\nu}{2}\left\|\nabla z^n\right\|_0^2  +\Delta t\frac{\nu}{2}\left\|\nabla \tau_2^n\right\|_0^2.
\end{equation}
Multiplying by~$4$, applying~\eqref{eq:cgenaux0}, and summing from~$2$ to~$n$, one gets 
\begin{equation}
\label{eq:cgenaux02}
\|z^n\|_0^2+2\nu\sum_{j=2}^n\Delta t \|\nabla z^j\|_0^2 \le \|z^1\|_0^2+\|\hat z^1\|_0^2+2\sum_{j=2}^n\frac{\Delta t}{T}\|z^j\|_0^2
+2T\sum_{j=2}^n\Delta t \|\tau_1^j\|_0^2
 +2\nu\sum_{j=2}^n\Delta t \|\nabla \tau_2^j\|_0^2.
\end{equation}
Young's inequality yields
$
\|\hat z^1\|_0^2 \le 6\|z^1\|_0^2 + 3\| z^0\|^2, 
$
so that 
$
\|z^1\|_0^2+\|\hat z^1\|_0^2 \le 7\|z^1\|_0^2 + 3\| z^0\|^2.
$
Using again Young's inequality gives 
\[
 \Delta t(\tau_1^1,z^1) + \nu\Delta t(\nabla \tau_2^1,\nabla z^1)\le \frac{1}{4}\left\|z^1\right\|_0^2 + (\Delta t)^2\left\|\tau_1^1\right\|_0^2 +\Delta t\frac{\nu}{2}\left\|\nabla z^1\right\|_0^2  +\Delta t\frac{\nu}{2}\left\|\nabla \tau_2^1\right\|_0^2,
\]
so that we obtain from \eqref{eq:cgenaux00}
\[
\left\| z^1\right\|_0^2 +2\nu\Delta t\left\| \nabla z^1\right\|_0^2 \le 2\left\| z^0\right\|_0^2 + 4  (\Delta t)^2\left\|\tau_1^1\right\|_0^2 + 2 \Delta t \nu 
\left\|\nabla \tau_2^1\right\|_0^2.
\]
Together with \eqref{eq:cgenaux02}, it follows that for $n\ge 1$
\begin{eqnarray*}
\|z^n\|_0^2+2\nu\sum_{j=1}^n\Delta t \|\nabla z^j\|_0^2 &\le& 17\|z^0\|_0^2+28 (\Delta t)^2\left\|\tau_1^1\right\|_0^2+2\sum_{j=2}^n\frac{\Delta t}{T}\|z^j\|_0^2
+2T\sum_{j=2}^n\Delta t \|\tau_1^j\|_0^2
\nonumber\\
&& +14\Delta t \nu \|\nabla \tau_2^1\|_0^2+2\nu\sum_{j=1}^n\Delta t \|\nabla \tau_2^j\|_0^2,
\end{eqnarray*}
from where~\eqref{eq:cgen2} follows by applying Gronwall's Lemma \cite[Lemma 5.1]{hey4} for $\Delta t\le T/4$.
\end{proof}

Let $X=L^2(\Omega)$ and 
let us denote by $e_r^n=u_r^n-P^r u_h^n$ and by $\eta_h^n=P^r u_h^n-u_h^n$. Arguing as in the proof of 
\cite[Theorem~4.6]{nos_der_pod}, one gets
\begin{equation}\label{error_equation}
\left({D e_r^n},v\right)+\nu(\nabla e_r^n,\nabla v)=\left(\partial_t u_h^n-{Du_h^n},v\right)-\nu(\nabla \eta_h^n,\nabla v),\quad \forall\  v\in \bU^r.
\end{equation}

\begin{lema}\label{le:consistency} The following bounds hold
\begin{eqnarray}
\label{trun12}
\left\|\partial_t u_h^1-{D^1u_h^1}\right\|_j \!\!\! & \le & \!\!\! \frac{\Delta t}{2} \max_{0\le t\le t_1} \left\| \partial_{tt} u_h\right\|_j,\quad j=0,1,
\\
\label{trun_BDF2}
\left\|\partial_t u_h^n-{D^2u_h^n}\right\|_j  \!\!\! &\le& \!\!\! \sqrt{5} (\Delta t)^{3/2} \biggl(\int_{t_{n-2}}^{t_n} \left\|\partial_{ttt} u_h(t)\right\|_j^2\,dt\biggr)^{1/2},\quad
n=2,\ldots,N, \  j=0,1.
\end{eqnarray}
\end{lema}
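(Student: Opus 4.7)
Both bounds are classical consistency estimates for backward differences, and I will obtain them from Taylor's formula with integral remainder, interpreted in the spatial norm $\|\cdot\|_j$ via Minkowski's inequality.

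For~\eqref{trun12} the plan is to expand $u_h(t^0)$ around $t^1$ with Taylor's formula truncated at the first derivative,
\[
u_h(t^0) = u_h(t^1) - \Delta t\,\partial_t u_h(t^1) + \int_{t^0}^{t^1}(s-t^0)\,\partial_{tt}u_h(s)\,ds,
\]
divide by $\Delta t$, and rearrange to obtain
\[
\partial_t u_h^1 - D^1 u_h^1 = \frac{1}{\Delta t}\int_{t^0}^{t^1}(s-t^0)\,\partial_{tt}u_h(s)\,ds.
\]
Pulling $\|\cdot\|_j$ inside the integral, bounding $\|\partial_{tt}u_h(s)\|_j$ by $\max_{[0,t^1]}\|\partial_{tt}u_h\|_j$, and noting that $\int_{t^0}^{t^1}(s-t^0)\,ds = (\Delta t)^2/2$ yields exactly~\eqref{trun12}.

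For~\eqref{trun_BDF2} I will expand both $u_h^{n-1}$ and $u_h^{n-2}$ around $t^n$ by Taylor's formula with integral remainder truncated at the second derivative,
\[
u_h^{n-k} = u_h^n - k\Delta t\,\partial_t u_h^n + \frac{(k\Delta t)^2}{2}\,\partial_{tt}u_h^n + R_k,\qquad k\in\{1,2\},
\]
with $R_k = \tfrac{1}{2}\int_{t^n}^{t^{n-k}}(t^{n-k}-s)^2\,\partial_{ttt}u_h(s)\,ds$. Forming the combination $\tfrac{3}{2}u_h^n - 2u_h^{n-1} + \tfrac{1}{2}u_h^{n-2}$, the zeroth, first, and second order Taylor terms cancel — this is just the usual fact that BDF2 is exact on polynomials of degree~$\le 2$ — leaving
\[
\partial_t u_h^n - D^2 u_h^n = \frac{1}{\Delta t}\Bigl(2R_1 - \tfrac{1}{2}R_2\Bigr).
\]

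It remains to bound each $R_k$ in $\|\cdot\|_j$. Applying Minkowski's inequality followed by Cauchy--Schwarz in time and using $\int_{t^{n-k}}^{t^n}(s-t^{n-k})^4\,ds = (k\Delta t)^5/5$, I will get
\[
\|R_k\|_j \le \frac{(k\Delta t)^{5/2}}{2\sqrt{5}}\biggl(\int_{t^{n-k}}^{t^n}\|\partial_{ttt}u_h(s)\|_j^2\,ds\biggr)^{1/2}.
\]
Since $[t^{n-1},t^n]\subset[t^{n-2},t^n]$, both remainders can be controlled by the integral over $[t^{n-2},t^n]$. Inserting these bounds into the expression for $\partial_t u_h^n - D^2 u_h^n$ and dividing by $\Delta t$ produces a constant $(1+\sqrt{2})/\sqrt{5}$ in front of $(\Delta t)^{3/2}$, which is comfortably below $\sqrt{5}$. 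I do not expect a real obstacle; the only point requiring care is the sign and orientation of the Taylor integrals when the base point is the right endpoint $t^n$, and the use of Cauchy--Schwarz loosely enough to land exactly at the claimed constant $\sqrt{5}$.
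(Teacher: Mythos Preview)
Your proposal is correct and follows essentially the same route as the paper: Taylor expansion with integral remainder, then Cauchy--Schwarz in time. The only cosmetic difference is that the paper packages the BDF2 remainder as a single Peano-kernel integral over $[t_{n-2},t_n]$, whereas you keep the two Taylor remainders $R_1,R_2$ separate; your resulting constant $(1+\sqrt{2})/\sqrt{5}$ is in fact a bit sharper than the paper's $(2+2\sqrt{2})/\sqrt{5}$, and both are comfortably below $\sqrt{5}$.
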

\begin{proof}
For $D=D^1$, \eqref{trun12} follows easily from
\[
\partial_t u_h^n-{Du_h^n}=\frac{1}{\Delta t} \int_{t_{n-1}}^{t_n}(\partial_t u_h(t_{n})-\partial_t u_h(s))\,ds =\frac{1}{\Delta t} \int_{t_{n-1}}^{t_n}\biggl(\int_s^{t_n} \partial_{tt} u_h(t)\,dt\biggr)\,ds.
\]
For $D=D^2$, Taylor series expansion with integral reminder reveals that
\[
\partial_t u_h^n-{Du_h^n}=\frac{1}{\Delta t} \int_{t_{n-2}}^{t_n}\left(2(t-t_{n-1})_{+}^2 -\frac{1}{2}(t-t_{n-2})^2\right)\partial_{ttt} u_h\,dt,
\]
where $x_{+}=\max(0,x)$,  for $x\in{\mathbb R}$. Then, a straightforward calculation shows that
\[
\left\|\partial_t u_h^n-{Du_h^n}\right\|_j \le \left(\frac{2}{\sqrt{5}} + \frac{2\sqrt{2}}{\sqrt{5}}\right) (\Delta t)^{3/2} \biggl(\int_{t_{n-2}}^{t_n} \left\|\partial_{ttt} u_h(t)\right\|_j^2\,dt\biggr)^{1/2},
\]
and then \eqref{trun_BDF2} follows by noticing that $2+2\sqrt{2} < 5$.
\end{proof}

\begin{lema} 
Let $X=L^2(\Omega)$. It holds
\begin{equation}\label{eta_u2}
\nu\sum_{j=1}^n\Delta t \|\nabla \eta_h^j\|_0^2\le \nu T\|S\|_2\left(2+4\tilde C \frac{T^2}{\tau^2}\right)\sum_{k={r+1}}^d\lambda_k.
\end{equation}
\end{lema}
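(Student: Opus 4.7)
The plan is to reduce the estimate to the already-proved bound~\eqref{max_dif} by exploiting the inverse-type inequality~\eqref{inv_POD}, which is the main reason the $X=L^2(\Omega)$ case is simpler than $X=H_0^1(\Omega)$ here. Since~\eqref{inv_POD} only applies to functions in $\bU$, the crux is to verify that each $\eta_h^j = P^r u_h^j - u_h^j$ actually lies in $\bU$. That is the only nontrivial point; everything else is an assembly of existing pieces.

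First I would show that $u_h^j \in \bU$ for every $j=0,1,\ldots,M$. By the telescoping identity~\eqref{zhal}, $u_h^j = u_h^0 + \Delta t\sum_{k=1}^{j} D^1 u_h^k$, and each divided difference $D^1 u_h^k$ sits in $\bU$ (up to the scaling factor $\tau$). Hence it is enough to check that $u_h^0 \in \bU$. If $w_0 = u_h^0$ this is immediate; if $w_0 = \overline u_h$, the second identity in~\eqref{zhal} expresses $u_h^0$ as a linear combination of $\overline u_h$ and of the divided differences, all of which belong to $\bU$. Because $P^r u_h^j \in \bU^r \subset \bU$ as well, it follows that $\eta_h^j \in \bU$.

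Next I would apply the inverse inequality~\eqref{inv_POD} to $\eta_h^j$, squaring it to obtain $\|\nabla \eta_h^j\|_0^2 \le \|S\|_2\,\|\eta_h^j\|_0^2$. Multiplying by $\nu \Delta t$, summing from $j=1$ to $n$, and using $n\Delta t \le T$, one gets
\begin{equation*}
\nu \sum_{j=1}^n \Delta t\,\|\nabla \eta_h^j\|_0^2 \;\le\; \nu T \|S\|_2 \max_{0\le j\le M}\|u_h^j - P^r u_h^j\|_0^2.
\end{equation*}
Finally I would insert~\eqref{max_dif} with $C_X=1$ (since $X=L^2(\Omega)$), which produces exactly the factor $\left(2+4\tilde C T^2/\tau^2\right)\sum_{k=r+1}^d \lambda_k$ on the right-hand side, yielding~\eqref{eta_u2}.

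I do not anticipate any real obstacle; the only step that requires a moment's thought is the one that forces the inclusion of $w_0$ in the snapshot set in the first place, namely showing that the reconstructed trajectory $u_h^j$ lives in $\bU$ so that the POD inverse estimate is applicable.
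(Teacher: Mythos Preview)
Your proof is correct and follows exactly the approach the paper indicates: apply the POD inverse inequality~\eqref{inv_POD} and then invoke~\eqref{max_dif}. The paper's proof is a one-liner that leaves implicit the point you spell out carefully, namely that each $u_h^j$ (and hence $\eta_h^j$) lies in $\bU$ so that~\eqref{inv_POD} is applicable.
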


\begin{proof}
The proof of \eqref{eta_u2} follows easily by applying \eqref{inv_POD}  and \eqref{max_dif}. 
\end{proof}

\begin{Theorem}[Bound for $X=L^2(\Omega)$]\label{thm:L2_bound}
Let $X=L^2(\Omega)$, then it holds  for  $\Delta t\le T/4$ 
\begin{eqnarray}\label{cota_u2_l2_d2}
\max_{1\le n\le M}\|u_r^n-u^n\|_0^2
 \!\!&\le &\!\!
60e^{4}\left(\|e_r^0\|_0^2+(\Delta t)^4 \max_{0\le s\le \Delta t}\|\partial_{tt} u_h(s)\|_0^2+T(\Delta t)^4\int_0^T\|\partial_{ttt} u_h(s)\|_0^2 \ ds\right)
\nonumber\\
&&{}+3(1+14T\nu e^4\|S\|_2) \left(2+4\tilde C \frac{T^2}{\tau^2}\right)\sum_{k={r+1}}^d\lambda_k+3C(u)^2 h^{2(l+1)}.
\end{eqnarray}
\end{Theorem}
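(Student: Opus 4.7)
The plan is to decompose the total error as $u_r^n-u^n=e_r^n+\eta_h^n+(u_h^n-u^n)$, estimate each piece separately, and combine via the triangle inequality together with $(a+b+c)^2\le 3(a^2+b^2+c^2)$. The Galerkin piece $\|u_h^n-u^n\|_0$ is immediately controlled by \eqref{cota_gal}, producing the term $3C(u)^2h^{2(l+1)}$ in \eqref{cota_u2_l2_d2}. The projection piece $\|\eta_h^n\|_0^2$ is bounded uniformly in $n$ by \eqref{max_dif} with $C_X=1$, producing the contribution $3(2+4\tilde C\,T^2/\tau^2)\sum_{k=r+1}^d\lambda_k$. The main work is therefore the estimate of $\|e_r^n\|_0^2$.

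For the latter, I would apply Lemma \ref{le:cgen} to the error equation \eqref{error_equation}, identifying $\tau_1^n=\partial_t u_h^n-Du_h^n$ and $\tau_2^n=-\eta_h^n$. The consistency errors are then controlled by Lemma \ref{le:consistency}: the start-up bound \eqref{trun12} gives $28(\Delta t)^2\|\tau_1^1\|_0^2\le 7(\Delta t)^4\max_{0\le s\le\Delta t}\|\partial_{tt}u_h(s)\|_0^2$, while the BDF2 bound \eqref{trun_BDF2}, squared and summed, yields $2\Delta t\,T\sum_{j=2}^N\|\tau_1^j\|_0^2\le 20T(\Delta t)^4\int_0^T\|\partial_{ttt}u_h(s)\|_0^2\,ds$, the factor $2$ arising because each instant $t$ belongs to at most two of the overlapping intervals $[t_{j-2},t_j]$. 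The elliptic terms $14\nu\Delta t\|\nabla\tau_2^1\|_0^2+2\nu\Delta t\sum_{j=2}^N\|\nabla\tau_2^j\|_0^2$ are absorbed into $14\nu\sum_{j=1}^N\Delta t\|\nabla\eta_h^j\|_0^2$ and then bounded through \eqref{eta_u2}.

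Collecting these estimates and enlarging the separate coefficients $17$, $7$, $20$ appearing in Lemma \ref{le:cgen} to a common value of $20$ for brevity gives
\begin{equation*}
\|e_r^n\|_0^2\le 20e^{4}\Bigl(\|e_r^0\|_0^2+(\Delta t)^4\max_{0\le s\le \Delta t}\|\partial_{tt}u_h(s)\|_0^2+T(\Delta t)^4\int_0^T\|\partial_{ttt}u_h(s)\|_0^2\,ds\Bigr)+14e^{4}\nu T\|S\|_2\Bigl(2+4\tilde C\frac{T^2}{\tau^2}\Bigr)\sum_{k=r+1}^d\lambda_k.
\end{equation*}
Multiplying by $3$ and adding the bounds on $\|\eta_h^n\|_0^2$ and $\|u_h^n-u^n\|_0^2$ yields \eqref{cota_u2_l2_d2}: the coefficient $60=3\cdot 20$ appears in front of the BDF2 consistency contributions, and the POD tail is grouped into the factor $3(1+14T\nu e^4\|S\|_2)(2+4\tilde C T^2/\tau^2)$ by combining the contributions from $\|e_r^n\|_0^2$ and $\|\eta_h^n\|_0^2$.

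The only real bookkeeping difficulty is the asymmetry between the order-one start-up step ($D=D^1$) and the subsequent BDF2 steps: the two consistency errors carry different powers of $\Delta t$ and receive different weights in Lemma \ref{le:cgen}, so one must check that both contributions clean up to a uniform $(\Delta t)^4$ factor after squaring and summing. Once this is verified, the proof is the assembly of previously established lemmas, with no additional stability analysis needed.
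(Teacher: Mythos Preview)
Your proposal is correct and follows essentially the same approach as the paper's proof: the same decomposition $u_r^n-u^n=e_r^n+\eta_h^n+(u_h^n-u^n)$, the same application of Lemma~\ref{le:cgen} with $\tau_1^n=\partial_t u_h^n-Du_h^n$ and $\tau_2^n=-\eta_h^n$, the same use of \eqref{trun12}, \eqref{trun_BDF2}, \eqref{eta_u2}, and \eqref{max_dif}, and even the same observation about the overlapping intervals and the rounding of constants $17$, $7$ to $20$.
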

\begin{proof}
From~\eqref{error_equation} and \eqref{eq:cgen2}, applying~\eqref{trun12}, \eqref{trun_BDF2} (noting that most integrals over time intervals $[t_{j-1},t_j]$
appear twice when summing over $n$), and  \eqref{eta_u2},  we obtain
\begin{eqnarray*}
\lefteqn{\|e_r^n\|_0^2+\nu\sum_{j=1}^n\Delta t \|\nabla e_r^j\|_0^2
\le e^{4}\left(17\|e_r^0\|_0^2+7(\Delta t)^4 \max_{0\le s\le \Delta t}\|\partial_{tt} u_h(s)\|_0^2 \right.}\nonumber\\
&&\left.+20T(\Delta t)^4\int_0^T\|\partial_{ttt} u_h(s)\|_0^2 ds
+14 \nu T\|S\|_2\left(2+4\tilde C \frac{T^2}{\tau^2}\right)\sum_{k={r+1}}^d\lambda_k \right).
\end{eqnarray*}
To simplify, we replace the factors $17$ and $7$ by $20$.
To finish the proof, apply the decomposition $u_r^n-u^n=(u_r^n-P_h u_h^n)+(P_h u_h^n-u_h^n)+(u_h^n-u^n)$,
followed by \eqref{max_dif} and \eqref{cota_gal}.
\end{proof}

Let $X=H_0^1(\Omega)$. Arguing as in the proof of \cite[Theorem 4.1]{nos_der_pod} yields
\begin{equation*}
\left({D e_r^n},v\right)+\nu(\nabla e_r^n,\nabla v)=\left(\partial_t u_h^n-P_r\left({D u_h^n}\right),v\right),\quad  \forall v\in \bU^r.
\end{equation*}
Applying Lemma~\ref{le:cgen} with $z^n=e_r^n$, $\tau_1^n = \partial_t u_h^n-P_rD u_h^n$ and~$\tau_2=0$ we get
\begin{equation}\label{eq:H100}
\|e_r^n\|_0^2+2\nu\sum_{j=1}^n\Delta t \|\nabla e_r^j\|_0^2
\le e^{4}\left(17\|e_r^0\|_0^2+28(\Delta t)^2\left\|\tau_1^1\right\|_0^2+2\Delta t T\sum_{j=2}^N \left\| \tau_1^n\right\|_0^2\right).
\end{equation}

\begin{Theorem}[Bound for $X=H_0^1(\Omega)$] Let $X=H_0^1(\Omega)$, 
$\Delta t\le T/4$, and $
C_1=4e^4\left(10+ \Delta t /T\right)+ 2+4\tilde C$. Then it holds  
\begin{eqnarray}\label{cota_u2_l2}
\max_{1\le n\le M}\|u_r^n-u^n\|_0^2 \!\!\! & \le  & \!\!\!
60e^{4}\left(\|e_r^0\|_0^2+ (\Delta t)^4 \max_{0\le s\le \Delta t}\|\partial_{tt} u_h(s)\|_0^2+2(\Delta t)^4\int_0^T\|\partial_{ttt} u_h(t)\|_0^2 \ dt \right)\nonumber\\
&&\quad +3C_1C_p^2\left(\frac{T}{\tau}\right)^2\sum_{j={r+1}}^d \lambda_k+3C^2(u) h^{2(l+1)}.
\end{eqnarray}
\end{Theorem}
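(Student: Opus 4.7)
The plan is to apply Lemma~\ref{le:cgen} directly to the error equation displayed just above the theorem, choosing $\tau_2^n=0$ and $\tau_1^n=\partial_t u_h^n-P^r Du_h^n$. This choice yields inequality \eqref{eq:H100}, so the proof reduces to estimating the consistency term $\|\tau_1^n\|_0$ in a form that cleanly separates a purely temporal contribution from a POD projection contribution.

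First I would split $\tau_1^n=(\partial_t u_h^n-Du_h^n)+(Du_h^n-P^rDu_h^n)$ and use $(a+b)^2\le 2a^2+2b^2$. For the BDF truncation, Lemma~\ref{le:consistency} delivers exactly what is needed: \eqref{trun12} for $n=1$ produces the $(\Delta t)^4\max\|\partial_{tt}u_h\|_0^2$ contribution, and \eqref{trun_BDF2}, summed over $n=2,\ldots,M$ with the observation that each subinterval $[t_{j-1},t_j]$ appears at most twice, produces the $(\Delta t)^4\int_0^T\|\partial_{ttt}u_h\|_0^2\,dt$ contribution. For the POD part, since $P^r$ is the $H_0^1$-orthogonal projection in this case, Poincar\'e \eqref{poincare} lets me pass from $\|Du_h^n-P^rDu_h^n\|_0$ to $C_p\|\nabla(Du_h^n-P^rDu_h^n)\|_0$. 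The key observation, and the whole point of the difference-quotient construction, is that $\tau D^1u_h^j$ is exactly the snapshot $y_h^{j+1}$ of~$\bU$. Hence for $n=1$ the quantity $\|\nabla(D^1u_h^1-P^rD^1u_h^1)\|_0^2$ is, up to the factor $\tau^{-2}$, a single term of the sum in \eqref{cota_pod} and is therefore crudely bounded by $N\tau^{-2}\sum_{k=r+1}^d\lambda_k$. For $n\ge2$ I would exploit the algebraic identity
\[
D^2u_h^n=\tfrac{3}{2}D^1u_h^n-\tfrac{1}{2}D^1u_h^{n-1},
\]
which rewrites the BDF2 projection error as a linear combination of two snapshot-projection errors; squaring and summing over $n$ so that each $D^1$-term appears at most twice, one arrives, via \eqref{cota_pod}, at the bound $\sum_{n=2}^M\|\nabla(D^2u_h^n-P^rD^2u_h^n)\|_0^2\le 5N\tau^{-2}\sum_{k=r+1}^d\lambda_k$.

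Inserting everything into \eqref{eq:H100}, the two POD contributions carry the weights $28(\Delta t)^2$ and $2\Delta t\,T$; applying $N\Delta t\le 2T$ (which is valid because $\Delta t\le T/4$) turns them into $T\Delta t/\tau^2$ and $T^2/\tau^2$ respectively, reproducing the announced constant of the form $4e^4(10+\Delta t/T)\,C_p^2(T/\tau)^2$. The remaining step is the standard decomposition $u_r^n-u^n=e_r^n+\eta_h^n+(u_h^n-u^n)$: I would apply the bound just obtained to $e_r^n$, use \eqref{max_dif} with $C_X=C_p^2$ for $\eta_h^n$ (absorbing $2+4\tilde C\,T^2/\tau^2$ into $(2+4\tilde C)(T/\tau)^2$ under the implicit assumption $\tau\le T$, which produces the $2+4\tilde C$ summand of $C_1$), and \eqref{cota_gal} for the FOM error, after multiplying by the factor $3$ from the triangle inequality. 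The main obstacle I anticipate is the constant bookkeeping in the POD part: one must track carefully how the individual-snapshot bound used at $n=1$ (where no telescoping over $n$ is available) interacts with $N\Delta t\le 2T$ so that it contributes only the small $\Delta t/T$ correction to the leading $40e^4$ constant, rather than inflating the leading coefficient.
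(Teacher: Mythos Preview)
Your proposal is correct and follows essentially the same route as the paper: start from \eqref{eq:H100}, split $\tau_1^n$ into the BDF truncation and the POD projection piece, handle the truncation with Lemma~\ref{le:consistency}, pass to the $H_0^1$ norm via Poincar\'e, use the identity $D^2u_h^n=\tfrac32 D^1u_h^n-\tfrac12 D^1u_h^{n-1}$ together with \eqref{cota_pod}, and finish with the decomposition $u_r^n-u^n=e_r^n+\eta_h^n+(u_h^n-u^n)$, \eqref{max_dif}, and \eqref{cota_gal}. Your remark that writing $2+4\tilde C\,T^2/\tau^2\le (2+4\tilde C)(T/\tau)^2$ tacitly assumes $\tau\le T$ is a correct observation that the paper does not make explicit.
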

\begin{proof}
The last two terms on the right-hand side of  \eqref{eq:H100} are bounded by the triangle inequality
\begin{equation}\label{muno}
\left\|\tau_1^n\right\|_0^2=\left\|\partial_t u_h^n-P^r\left({Du_h^n}\right)\right\|_0^2
\le 2\left\|\partial_t u_h^n-\left({D u_h^n}\right)\right\|_0^2
+2 \left\|(I-P^r)\left({Du_h^n}\right)\right\|_0^2.
\end{equation}
For $n=1$, the first term is bounded by \eqref{trun12} and the second one by \eqref{poincare} and \eqref{cota_pod},
giving
\begin{equation*}
(\Delta t)^2\left\|\tau_1^1\right\|_0^2\le \frac{(\Delta t)^4}{2} \max_{0\le s\le \Delta t}\|\partial_{tt} u_h(s)\|_0^2+\frac{4T}{\tau^2}C_p^2\Delta t \sum_{k={r+1}}^d\lambda_k.
\end{equation*}
For $n\ge 2$, the first term of \eqref{muno} is estimated by \eqref{trun_BDF2}.
To bound the other term observe that 
\[
D^2u_h^n=(3/2)D^1 u^n-(1/2)D^1 u^{n-1},
\]
and, consequently,
\[
2 \left\|(I-P^r)\left({D^2u_h^n}\right)\right\|_0^2 \le \frac{9}{2}
\left\|(I-P^r)\left({D^1u_h^n}\right)\right\|_0^2 + \frac{1}{2}
\left\|(I-P^r)\left({D^1u_h^n}\right)\right\|_0^2,
\]
so that, by using \eqref{poincare} and \eqref{cota_pod}, one obtains
\[
2T\sum_{j=2}^n\Delta t \left\|(I-P^r)\left({D^2u_h^n}\right)\right\|_0^2
\le  10 T\sum_{j=1}^{n}\Delta t \left\|(I-P^r)\left({D^1u_h^n}\right)\right\|_0^2
\nonumber\\
\le 20C_p^2\left(\frac{T}{\tau}\right)^2\sum_{j={r+1}}^d \lambda_k.
\]
Collecting the estimates for $n=1$ and $n\ge 2$ leads to 
\begin{eqnarray*}
\|e_r^n\|_0^2+2\nu\sum_{j=1}^n\Delta t \|\nabla e_r^j\|_0^2
&\le& e^{4}\left(17\|e_r^0\|_0^2+14 (\Delta t)^4 \max_{0\le s\le \Delta t}\|\partial_{tt} u_h(s)\|_0^2\right.\nonumber\\
&&\left.+40(\Delta t)^4\int_0^T\|\partial_{ttt} u_h(t)\|_0^2 \,dt +4\left(10+ \frac{\Delta t}{T}\right)C_p^2\frac{T^2}{\tau^2}  \sum_{j={r+1}}^d \lambda_k\right).
\end{eqnarray*}
Now, the proof is finished in the same way as the proof of Theorem~\ref{thm:L2_bound}.
\end{proof}

Second order error bounds in time of form \eqref{cota_u2_l2_d2} and \eqref{cota_u2_l2} can be derived if 
the finite differences in $\bU$ are replaced with the temporal derivatives $\{\partial_t u_h^n\}_{n=0}^M$, 
with only slight modifications in the analysis. If the set of snapshots is $\{u_h^n\}_{n=0}^M$, then a
second order estimate for $\sum_{j=1}^M \Delta t \|u_r^n-u^n\|_0^2$ can be shown along the lines of the presented analysis but neither pointwise estimates nor optimal estimates in the $H^1$ norm can be obtained.

\bibliographystyle{abbrv}
\bibliography{references}
\end{document}